\documentclass[12pt,reqno]{amsart}

\usepackage[utf8]{inputenc}
\usepackage[english]{babel}
\usepackage{csquotes}

\usepackage{geometry}
\usepackage{setspace}
\usepackage[bottom]{footmisc}

\usepackage{amsmath, amsfonts, amssymb, amsthm, mathtools}
\usepackage{dsfont}

\usepackage{graphicx}
\usepackage{caption}
\usepackage{float}
\usepackage{booktabs}
\usepackage{dcolumn}
\usepackage{pdflscape}
\usepackage{rotating}
\usepackage{xcolor}
\usepackage{titlesec}

\usepackage{fancyhdr}
\fancypagestyle{plain}{
  \fancyhf{}
  \fancyfoot[C]{\thepage}
  
}

\usepackage[
    colorlinks=true,
    linktoc=all,
    linkcolor=blue,
    citecolor=blue,
    urlcolor=blue
]{hyperref}

\numberwithin{equation}{section}

\newtheorem{theorem}{Theorem}[section]
\newtheorem{lemma}[theorem]{Lemma}

\newtheorem{corollary}[theorem]{Corollary}

\newtheorem{remark}[theorem]{Remark}
\newtheorem{assumption}[theorem]{Assumption}

\newcommand{\Top}{\mathcal T}
\newcommand{\Tmat}{\mathbf T}

\titleformat{\section}
  {\normalfont\Large\bfseries}
  {\thesection}{1em}{}

\titleformat{\subsection}
  {\normalfont\large\bfseries}
  {\thesubsection}{1em}{}

\titleformat{\subsubsection}
  {\normalfont\normalsize\bfseries}
  {\thesubsubsection}{1em}{}

\begin{document}


\begin{titlepage}
\thispagestyle{empty}

\begin{center}

\vspace*{2.5cm}

{\LARGE\bfseries Spectral Characterisation of Covariance Existence in Markov-Switching Affine Recurrences \par}

\vspace{1cm}

{\Large Wilson Tai\par}

\vspace{0.5cm}

{\large Dublin City University\par}
{\large DCU School of Mathematical Sciences\par}

\vspace{1cm}

{\Large\bfseries Abstract\par}

\vspace{0.5cm}

\begin{minipage}{0.8\textwidth}
\onehalfspacing
This paper provides a finite-dimensional spectral characterisation for when the stationary covariance matrix of a finite-state Markov-switching affine recurrence exists. For stochastic recurrences of the form
\[
Z_{n+1}=A_{\Theta_{n+1}}Z_n+\zeta_{n+1},
\]
strict stationarity is typically governed by negativity of the top Lyapunov exponent of the random matrix products. This condition, however, does not ensure that the stationary law has finite variances, covariances, and Pearson correlations. We show that these second-order objects are governed instead by a Markov-switching Kronecker operator. If
\[
T_{ji}:=p_{ij}(A_j\otimes A_j),
\]
then, under a natural Perron-excitation condition on the innovation covariance, \(\rho(T)<1\) is necessary and sufficient for the stationary solution to be square-integrable. When this condition holds, the regime-weighted second moments solve a finite-dimensional linear system, which yields the covariance matrix explicitly. Hence, this paper translates the mean-square spectral condition into an exact covariance-existence criterion for the stationary distribution. Examples illustrate the separation: the Lyapunov exponent may be negative while the second-order spectral radius exceeds one, so strict stationarity can persist even though variances, covariances, and Pearson correlations
are not finite objects.
\end{minipage}

\vfill

\begin{minipage}{0.8\textwidth}
\onehalfspacing
\noindent\textbf{Correspondence:} Wilson Tai, \texttt{wilson.tai2@mail.dcu.ie}

\vspace{0.3cm}

\noindent\textbf{Keywords:} Markov-switching stochastic recurrence equations; regime-dependent Kronecker operators; stationary covariance existence; mean-square stability; Lyapunov exponent separation.


\end{minipage}

\end{center}

\end{titlepage}
\setcounter{page}{1}
\pagenumbering{arabic}
\pagestyle{fancy}

\section{Introduction}

A strictly stationary Markov-switching affine recurrence need not have a finite stationary covariance matrix. Thus, the usual contraction condition guaranteeing a stationary causal solution does not, by itself, justify computing stationary variances, covariances, or Pearson correlations. It is therefore natural to ask under what additional conditions these second-order objects exist. In this paper, we provide a finite-dimensional spectral characterisation which answers this question.

We consider finite-state Markov-switching affine recurrences of the form
\begin{equation}
Z_{n+1}=A_{\Theta_{n+1}}Z_n+\zeta_{n+1},
\qquad Z_n\in\mathbb R^d,
\label{eq:intro-recurrence}
\end{equation}
where \(A_j\in\mathbb R^{d\times d}\) for \(j\in S=\{1,\ldots,M\}\), and \((\Theta_n)\) is a finite-state Markov chain with transition matrix \(P=(p_{ij})\). The qualitative behaviour of such recurrences is usually governed by the almost-sure growth rate of the products
\[
A_{\Theta_n}A_{\Theta_{n-1}}\cdots A_{\Theta_1},
\]
or equivalently by negativity of the associated top Lyapunov exponent, under standard logarithmic moment assumptions. This is the familiar pathwise contraction condition from the theory of stochastic recurrence equations, perpetuities, random difference equations, and iterated random functions
\cite{Vervaat1979,Brandt1986,DiaconisFreedman1999,
BougerolPicard1992AnnProb,BougerolPicard1992JEconometrics}.

The closest work to the present paper is Basak and Lu \cite{BasakLu2005}, who study stationarity and second-order properties of Markov-switching VAR-type models. More broadly, the paper is related to regime-switching autoregressions and VARs \cite{Hamilton1989,Krolzig1997,BasakLu2005}, bilinear and nonlinear time-series models \cite{GrangerAndersen1978,SubbaRaoGabr1984,LingPengZhu2015}, and continuous-time systems with Markovian switching \cite{MaoYuan2006,ShaoXi2015,YinZhu2010}. These literatures study stability, stationarity, recurrence, and moment properties in switching or multiplicative-noise settings.

The emphasis in the present paper is somewhat different. We take as our starting point an affine recurrence which already admits a strictly stationary solution and ask the more precise stationary-law question: under what additional condition is this stationary solution square-integrable, so that its stationary variances, covariances, and Pearson correlations are well-defined mathematical objects? Sufficient conditions for second-moment existence can often be obtained by direct estimates. Our aim is instead to give a complete characterisation. The answer is not determined by the top Lyapunov exponent: a Markov-switching recurrence may contract along typical sample paths while still making sufficiently persistent visits to locally expansive regimes. Such episodes may be too rare to destroy strict stationarity, but persistent enough to destroy second moments.

The relevant object is the Markov-switching Kronecker operator. Define the block matrix \(T\) by
\begin{equation}
T_{ji}:=p_{ij}(A_j\otimes A_j),
\qquad 1\leq i,j\leq M.
\label{eq:intro-Kronecker}
\end{equation}
This operator describes the evolution of regime-weighted second moments. The main result of the paper is that, under a natural Perron-excitation condition on the innovation covariances,
\[
\mathbb E\|Z\|^2<\infty
\qquad\Longleftrightarrow\qquad
\rho(T)<1.
\]
When \(\rho(T)<1\), the stationary covariance matrix is obtained from a finite-dimensional linear system. In contrast, when \(\rho(T)\geq1\), the recurrence may remain strictly stationary, while its covariance and Pearson correlation matrices fail to exist.

The contribution is at the level of covariance existence rather than tail asymptotics. The paper is related to the Kesten--Goldie theory of stochastic recurrence equations, which studies power-law tails and regular variation of stationary solutions
\cite{Kesten1973,Goldie1991,BuraczewskiDamekMikosch2016}. It is also related to random coefficient autoregressions, regime-switching time-series models, and ARCH/GARCH theory, where stationarity and moment existence are distinct restrictions
\cite{NichollsQuinn1982,Engle1982,Bollerslev1986,Nelson1990,
BougerolPicard1992JEconometrics}. Finally, the condition \(\rho(T)<1\) is closely related to mean-square stability criteria for Markov jump linear systems \cite{CostaFragosoMarques2005}. In common with that literature, we use a Kronecker spectral condition to describe second-order propagation. However, the thrust of the present paper is different: for an affine recurrence which is already strictly stationary, we show that the Markov-switching Kronecker condition is an exact criterion for covariance existence. The Perron-excitation assumption isolates the only obstruction to necessity, namely unstable second-order directions which are present in the Kronecker operator but are never reached by the innovation covariance.

This paper makes three contributions. First, it identifies the finite-dimensional Markov-switching Kronecker operator governing regime-weighted second moments. Second, it proves a necessary and sufficient square-integrability criterion under Perron excitation. Third, when the criterion holds, it gives an explicit covariance reconstruction formula. More precisely, if
\[
M_j:=\mathbb E[ZZ^\top 1_{\{\Theta=j\}}],
\]
then
\[
M_j=\sum_{i=1}^M p_{ij}A_jM_iA_j^\top+\pi_jQ_j,
\qquad j=1,\ldots,M,
\]
and the unconditional covariance matrix is
\[
\Sigma:=\sum_{j=1}^M M_j.
\]

The remainder of the paper is organised as follows. Section 2 introduces the model, the stationary solution, and the second-moment operator. Section 3 establishes the main spectral criterion and the covariance reconstruction formula. Section 4 gives examples which demonstrate that strict stationarity and finite covariance are genuinely different properties.
\section{Preliminaries}

This section fixes the notation and separates the two stability notions used throughout the paper. The first is strict stationarity, governed by the top Lyapunov exponent. The second is finite covariance, governed by a Markov-switching Kronecker operator. The distinction is important: strict stationarity is a pathwise contraction property, while covariance existence is a stationary-law, second-order property.

\subsection{Markov-switching affine recurrence}

Let \(S=\{1,\ldots,M\}\), and let \((\Theta_n)_{n\in\mathbb Z}\) be a stationary, irreducible and aperiodic Markov chain on \(S\) with transition matrix
\begin{equation}
P=(p_{ij})_{1\leq i,j\leq M},
\qquad
p_{ij}
=
\mathbb P(\Theta_{n+1}=j\mid \Theta_n=i).
\label{eq:transition-matrix}
\end{equation}
Let \(\pi=(\pi_1,\ldots,\pi_M)\) denote its invariant distribution:
\begin{equation}
\pi_j
=
\sum_{i=1}^M\pi_i p_{ij},
\qquad
\sum_{j=1}^M\pi_j=1.
\label{eq:invariant-distribution}
\end{equation}
Since the chain is finite and irreducible, \(\pi_j>0\) for every \(j\in S\); see, for example, \cite{MeynTweedie2009}.

We study the Markov-switching affine recurrence
\begin{equation}
Z_{n+1}=A_{\Theta_{n+1}}Z_n+\zeta_{n+1},
\qquad Z_n\in\mathbb R^d,
\label{eq:prelim-recurrence}
\end{equation}
where \(A_j\in\mathbb R^{d\times d}\) for \(j\in S\). This is a finite-state Markov-dependent analogue of the stochastic recurrence equation
\[
X=AX+B,
\]
studied, for example, in \cite{Brandt1986,BougerolPicard1992AnnProb,BuraczewskiDamekMikosch2016}.

For \(n\geq 1\), define the multiplicative product
\begin{equation}
\Phi_n
=
A_{\Theta_n}A_{\Theta_{n-1}}\cdots A_{\Theta_1}.
\label{eq:matrix-product}
\end{equation}
The top Lyapunov exponent is
\begin{equation}
\gamma
=
\lim_{n\to\infty}
\frac{1}{n}\log\|\Phi_n\|,
\label{eq:top-lyapunov}
\end{equation}
whenever the limit exists. Under the standing assumptions below, this limit exists almost surely and is constant.

\subsection{Standing assumptions}

We impose the following assumptions throughout.

\begin{assumption}[Stationarity]
\label{ass:stationarity}
The top Lyapunov exponent is negative:
\begin{equation}
\gamma<0.
\label{eq:negative-lyapunov}
\end{equation}
Moreover, \(\mathbb E\log^+\|\zeta_0\|<\infty\).
\end{assumption}

Under Assumption~\ref{ass:stationarity}, the recurrence \eqref{eq:prelim-recurrence} admits a unique stationary causal solution, given by the backward series
\begin{equation}
Z
=
\sum_{\ell=0}^{\infty}
A_{\Theta_0}A_{\Theta_{-1}}\cdots A_{\Theta_{-\ell+1}}\zeta_{-\ell},
\label{eq:stationary-backward-representation}
\end{equation}
where the \(\ell=0\) product is the identity. This is the standard contractive case for affine stochastic recurrences \cite{Brandt1986,BougerolPicard1992AnnProb,BuraczewskiDamekMikosch2016}.

\begin{assumption}[Innovations]
\label{ass:innovations}
Let
\[
\mathcal A^{\Theta}:=\sigma(\Theta_k:k\in\mathbb Z)
\]
be the sigma-algebra generated by the regime path. Conditional on
\(\mathcal A^{\Theta}\), the innovations \((\zeta_n)_{n\in\mathbb Z}\) are independent across time. Moreover, for every \(n\),
\begin{equation}
\mathbb E[\zeta_{n+1}\mid \mathcal A^{\Theta}]=0,
\label{eq:conditional-mean-zero}
\end{equation}
and
\begin{equation}
\mathbb E\!\left[
\zeta_{n+1}\zeta_{n+1}^{\top}
\mid
\mathcal A^{\Theta}
\right]
=
Q_{\Theta_{n+1}},
\qquad
Q_j\in\mathbb S^d_+,\quad \operatorname{tr}(Q_j)<\infty,\quad j\in S.
\label{eq:conditional-second-moment}
\end{equation}
\end{assumption}

Assumption~\ref{ass:innovations} permits the innovation law to depend on the realised regime path through the contemporaneous state \(\Theta_{n+1}\). It is not an unconditional independence assumption between innovations and regimes. Its purpose is more specific: conditional on the regime path, the new innovation is centred and independent of past innovations. This is the condition that allows the second-moment dynamics to close.

Indeed, finite truncations of the stationary backward series are measurable with respect to the regime path and finitely many past innovations. Hence, conditional on \(\mathcal A^{\Theta}\), they are independent of \(\zeta_{n+1}\). Therefore, for each \(j\in S\),
\begin{equation}
\mathbb E\!\left[
Z_n^{(N)}\zeta_{n+1}^{\top}1_{\{\Theta_{n+1}=j\}}
\right]
=
0,
\qquad
\mathbb E\!\left[
\zeta_{n+1}(Z_n^{(N)})^{\top}1_{\{\Theta_{n+1}=j\}}
\right]
=
0.
\label{eq:truncated-cross-orthogonality}
\end{equation}
When the stationary solution is already known to be square-integrable, the same identities hold with \(Z_n\) in place of \(Z_n^{(N)}\):
\begin{equation}
\mathbb E\!\left[
Z_n\zeta_{n+1}^{\top}1_{\{\Theta_{n+1}=j\}}
\right]
=
0,
\qquad
\mathbb E\!\left[
\zeta_{n+1}Z_n^{\top}1_{\{\Theta_{n+1}=j\}}
\right]
=
0.
\label{eq:cross-term-orthogonality}
\end{equation}
Accordingly, all second-moment identities below are first applied either to finite truncations or under the explicit hypothesis \(Z\in L^2\). In this sense, no second-moment finiteness is used before it has been proved.

The conditional centering assumption is imposed only to avoid carrying first-moment terms through the covariance recursion. Nonzero conditional means could be handled by augmenting the system with the corresponding first-moment equations. By contrast, conditional serial correlation in the innovations would introduce additional lag-covariance terms, and the Markov-switching Kronecker operator would no longer close the second-moment dynamics by itself.

\subsection{Regime-weighted second moments}

We now introduce the auxiliary objects that allow efficient study of covariance existence. For a stationary solution, define the regime-weighted second moments by
\begin{equation}
M_j
:=
\mathbb E[ZZ^\top\mathbf 1_{\{\Theta=j\}}],
\qquad j=1,\ldots,M,
\label{eq:regime-second-moment-prelim}
\end{equation}
whenever these matrices are finite. Since the recurrence has no deterministic drift and the innovations are conditionally centred, the stationary mean is zero whenever it exists. In particular, whenever the stationary solution is square-integrable,
\begin{equation}
\mathbb E[Z]=0.
\label{eq:stationary-mean-zero}
\end{equation}
Thus, whenever the second moments are finite, the unconditional covariance matrix is
\begin{equation}
\Sigma
=
\operatorname{Var}(Z)
=
\mathbb E[ZZ^\top]
=
\sum_{j=1}^M M_j.
\label{eq:covariance-from-regime-moments}
\end{equation}

The fundamental question is whether the matrices \(M_j\) are finite. Assumption~\ref{ass:stationarity} gives strict stationarity, but it does not by itself imply square-integrability. Hence the issue is not whether the stationary solution exists, but whether it belongs to the second-moment space in which variances, covariances and Pearson correlations are legitimate objects.

\subsection{The Markov-switching Kronecker operator}

The evolution of regime-weighted second moments is governed by a Markov-switching Kronecker operator. For \(H=(H_1,\ldots,H_M)\), where each \(H_j\) is a symmetric \(d\times d\) matrix, define
\begin{equation}
(\Top H)_j
=
\sum_{i=1}^M p_{ij}A_jH_iA_j^\top,
\qquad j=1,\ldots,M.
\label{eq:second-order-operator-prelim}
\end{equation}
After vectorisation, \(\Top\) is represented by the block matrix \(\Tmat\) with blocks
\begin{equation}
\Tmat_{ji}
=
p_{ij}(A_j\otimes A_j),
\qquad 1\leq i,j\leq M.
\label{eq:block-T-prelim}
\end{equation}
Indeed, when
\[
\operatorname{vec}(H)
=
\begin{pmatrix}
\operatorname{vec}(H_1)\\
\vdots\\
\operatorname{vec}(H_M)
\end{pmatrix},
\]
then
\begin{equation}
\operatorname{vec}(\Top H)
=
\Tmat\operatorname{vec}(H).
\label{eq:operator-vectorisation}
\end{equation}
We write
\begin{equation}
\rho(\Top)=\rho(\Tmat).
\label{eq:spectral-radius-T}
\end{equation}
Throughout, \(\rho(\mathcal T)\) denotes the spectral radius of the induced linear operator on
\((\mathbb S^d)^M\). Equivalently, it is the Perron root of the vectorised block representation restricted to the invariant subspace of vectorised symmetric matrices.

The condition \(\rho(\Tmat)<1\) is the finite-state Markov-switching analogue of a mean-square stability condition for random coefficient and Markov jump linear systems \cite{NichollsQuinn1982,CostaFragosoMarques2005}. In this paper, however, it is used as more than a sufficient stability test: under the excitation condition below, it gives a necessary and sufficient criterion for the square-integrability of the stationary law.

Finally, define
\begin{equation}
q_j:=\pi_jQ_j,
\qquad j=1,\ldots,M.
\label{eq:q-def-prelim}
\end{equation}

\subsection{Perron excitation}

For the necessity direction, we need to rule out the degenerate case in which the second-order operator has an unstable positive direction that is never reached by the innovation covariance.

\begin{assumption}[Perron excitation]
\label{ass:perron-excitation}
Let \(R=(R_1,\ldots,R_M)\neq0\), with \(R_j\succeq0\) for every \(j\in S\), be any dual Perron eigenvector of \(\Top\), so that
\[
\Top^\ast R=\rho(\Top)R.
\]
Then
\[
\sum_{j=1}^M \operatorname{tr}(R_jq_j)>0.
\]
\end{assumption}

Assumption~\ref{ass:perron-excitation} is a non-degeneracy condition rather than a stability condition. It requires every positive second-order Perron direction of \(\Top\) to receive some innovation variance. Without it, \(\rho(\Top)\geq1\) could correspond to an unstable second-order direction that is never excited by the affine recursion, in which case the spectral radius would overstate the second-moment growth relevant for the stationary solution.

The condition is automatic when \(Q_j\succ0\) for every regime \(j\). Indeed, since \(\pi_j>0\), this implies \(q_j=\pi_jQ_j\succ0\) for every \(j\). Hence, for any nonzero \(R=(R_1,\ldots,R_M)\) with \(R_j\succeq0\), at least one \(R_j\) is nonzero, and for that regime \(\operatorname{tr}(R_jq_j)>0\). The next lemma records this common non-degenerate case.

\begin{lemma}[A simple sufficient condition]
\label{lem:sufficient-excitation}
If \(Q_j\succ0\) for every \(j\in S\), then Assumption~\ref{ass:perron-excitation} holds.
\end{lemma}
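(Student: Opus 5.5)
The argument is direct, and the key ingredient is the elementary fact that if \(X\succ0\) and \(Y\succeq0\) with \(Y\neq0\), then \(\operatorname{tr}(YX)>0\).

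First, since the chain is finite and irreducible, \(\pi_j>0\) for every \(j\in S\), as recorded after \eqref{eq:invariant-distribution}. Combined with \(Q_j\succ0\), this gives \(q_j=\pi_jQ_j\succ0\) for every \(j\).

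Next, let \(R=(R_1,\ldots,R_M)\neq0\) with \(R_j\succeq0\) be any dual Perron eigenvector of \(\Top\). The eigenvalue equation \(\Top^\ast R=\rho(\Top)R\) plays no role beyond supplying such an \(R\). Each term \(\operatorname{tr}(R_jq_j)\) is nonnegative. To see this, write
\[
\operatorname{tr}(R_jq_j)=\operatorname{tr}\bigl(q_j^{1/2}R_jq_j^{1/2}\bigr)\ge0,
\]
which holds because \(q_j^{1/2}R_jq_j^{1/2}\succeq0\).

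Since \(R\neq0\), there is an index \(k\) with \(R_k\neq0\). For this \(k\), the matrix \(q_k^{1/2}R_kq_k^{1/2}\) is positive semidefinite and nonzero, because \(q_k^{1/2}\) is invertible. A nonzero positive semidefinite matrix has strictly positive trace, so \(\operatorname{tr}(R_kq_k)>0\).

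An alternative route to the same inequality is to use \(q_k\succeq\lambda_{\min}(q_k)I\) with \(\lambda_{\min}(q_k)>0\). This gives
\[
\operatorname{tr}(R_kq_k)\ge\lambda_{\min}(q_k)\operatorname{tr}(R_k)>0,
\]
where \(\operatorname{tr}(R_k)>0\) because \(R_k\) is positive semidefinite and nonzero.

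Summing over \(j\), the total \(\sum_j\operatorname{tr}(R_jq_j)\) is a sum of nonnegative terms with at least one strictly positive term, so it is strictly positive. This is exactly Assumption~\ref{ass:perron-excitation}.

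The only mildly delicate step is the trace positivity for a nonzero positive semidefinite \(R_k\) against a positive definite \(q_k\). The square-root factorisation above handles it cleanly.
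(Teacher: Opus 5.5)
Your proof is correct and follows the same route as the paper: pick an index $k$ with $R_k\neq0$, use $\pi_k>0$ and $Q_k\succ0$ to get $\operatorname{tr}(R_kq_k)>0$, and conclude that the sum is positive. You also make explicit two steps the paper leaves implicit: that the remaining terms $\operatorname{tr}(R_jq_j)$ are nonnegative, and why a nonzero positive semidefinite matrix paired with a positive definite one has strictly positive trace.
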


\begin{proof}
Let \(R=(R_1,\ldots,R_M)\neq0\), with \(R_j\succeq0\) for every \(j\in S\), be a dual Perron eigenvector of \(\Top\). Then at least one \(R_j\) is nonzero. Since \(\pi_j>0\) for every \(j\), and \(Q_j\succ0\), we have
\[
\operatorname{tr}(R_jq_j)
=
\pi_j\operatorname{tr}(R_jQ_j)>0
\]
for at least one \(j\). Therefore
\[
\sum_{j=1}^M \operatorname{tr}(R_jq_j)>0.
\]
\end{proof}
\section{Main result}

We now prove the main covariance-existence criterion. The main theorem has two components. The first is probabilistic: when \(\rho(\Top)<1\), the stationary second moments are obtained by summing the propagated innovation covariances. The second is deterministic: if a finite nonnegative solution to the moment equation exists, then Perron excitation rules out \(\rho(\Top)\geq1\). The first barrier is therefore to separate the probabilistic construction of the second moments from the deterministic obstruction imposed by Perron directions. We isolate this obstruction first.

\begin{lemma}[Perron obstruction to finite second moments]
\label{lem:perron-obstruction}
Let
\[
\mathcal K=\{H=(H_1,\ldots,H_M):H_j\succeq0,\ j=1,\ldots,M\}.
\]
Suppose that \(\Top\mathcal K\subseteq\mathcal K\). Let \(q=(q_1,\ldots,q_M)\in\mathcal K\), and suppose that \(M\in\mathcal K\) is finite and satisfies
\begin{equation}
M=\Top M+q.
\label{eq:abstract-moment-equation}
\end{equation}
If Assumption~\ref{ass:perron-excitation} holds, then \(\rho(\Top)<1\).
\end{lemma}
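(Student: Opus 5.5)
We argue by contradiction, pairing the moment equation with a dual Perron eigenvector. Equip \((\mathbb S^d)^M\) with the trace pairing \(\langle R,H\rangle=\sum_{j}\operatorname{tr}(R_jH_j)\). The cone \(\mathcal K\) is closed, convex, pointed and solid, and it is self-dual under this pairing: \(\langle R,H\rangle\geq0\) for all \(H\in\mathcal K\) exactly when \(R\in\mathcal K\).

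\textbf{Step 1: a nonnegative dual Perron eigenvector.} Since \(\Top\mathcal K\subseteq\mathcal K\), the adjoint \(\Top^\ast\) also leaves \(\mathcal K^\ast=\mathcal K\) invariant. Indeed, for \(R\in\mathcal K\) and \(H\in\mathcal K\),
\[
\langle \Top^\ast R,H\rangle=\langle R,\Top H\rangle\geq0 .
\]
The Krein--Rutman theorem, in its finite-dimensional Perron--Frobenius form for cone-preserving maps, then gives a nonzero \(R\in\mathcal K\) with \(\Top^\ast R=\rho(\Top^\ast)R=\rho(\Top)R\). This is exactly the kind of vector to which Assumption~\ref{ass:perron-excitation} applies, so \(\langle R,q\rangle>0\).

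The main point to check here is that \(\rho(\Top)\) coincides with the spectral radius of \(\Top\) on \((\mathbb S^d)^M\), and that Krein--Rutman delivers an eigenvector for the eigenvalue \(\rho\) itself rather than for some other eigenvalue of modulus \(\rho\). Both follow from the standard statement for a proper cone in finite dimensions. I would cite it directly rather than reprove it.

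\textbf{Step 2: pair the moment equation with \(R\).} Apply \(\langle R,\cdot\rangle\) to \eqref{eq:abstract-moment-equation}:
\[
\langle R,M\rangle=\langle \Top^\ast R,M\rangle+\langle R,q\rangle=\rho(\Top)\langle R,M\rangle+\langle R,q\rangle,
\]
so that
\[
(1-\rho(\Top))\langle R,M\rangle=\langle R,q\rangle>0 .
\]
Because \(M\in\mathcal K\) is finite and \(R\in\mathcal K\), self-duality gives \(0\leq\langle R,M\rangle<\infty\).

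\textbf{Step 3: conclude.} If \(\langle R,M\rangle=0\), the left-hand side vanishes, contradicting strict positivity. Hence \(\langle R,M\rangle>0\), and then \(1-\rho(\Top)>0\), that is, \(\rho(\Top)<1\).

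Finiteness of \(M\) is used precisely in Step 2, where it makes the pairing a finite real number so the identity can be rearranged. This is why the lemma is stated for a finite solution of the moment equation.
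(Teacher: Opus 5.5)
Your proof is correct and follows the paper's argument. You obtain a nonzero dual Perron eigenvector $R\in\mathcal K$ from finite-dimensional Perron--Frobenius/Krein--Rutman, pair $M=\Top M+q$ with it to get $(1-\rho(\Top))\langle R,M\rangle=\langle R,q\rangle>0$, and use $\langle R,M\rangle\ge 0$ to rule out $\rho(\Top)\ge 1$; your self-duality argument that $\Top^\ast$ preserves $\mathcal K$ just makes explicit a step the paper asserts.
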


\begin{proof}
Since \(\Top\) is a positive linear operator on the finite-dimensional cone \(\mathcal K\), the finite-dimensional Perron--Frobenius theorem implies that the dual operator \(\Top^*\) admits a nonzero element \(R=(R_1,\ldots,R_M)\in\mathcal K\) associated with the spectral radius. Thus
\[
\Top^*R=\rho(\Top)R.
\]
Use the dual pairing
\[
\langle R,H\rangle=\sum_{j=1}^M\operatorname{tr}(R_jH_j).
\]
Pairing \eqref{eq:abstract-moment-equation} with \(R\) gives
\[
\langle R,M\rangle
=
\langle R,\Top M\rangle+\langle R,q\rangle.
\]
By definition of the adjoint and the Perron relation,
\[
\langle R,\Top M\rangle
=
\langle \Top^*R,M\rangle
=
\rho(\Top)\langle R,M\rangle.
\]
Therefore
\begin{equation}
(1-\rho(\Top))\langle R,M\rangle
=
\langle R,q\rangle.
\label{eq:perron-obstruction-identity}
\end{equation}

Now \(R_j\succeq0\) and \(M_j\succeq0\), so \(\langle R,M\rangle\geq0\). Moreover, by Assumption~\ref{ass:perron-excitation},
\[
\langle R,q\rangle
=
\sum_{j=1}^M\operatorname{tr}(R_jq_j)>0.
\]
If \(\rho(\Top)\geq1\), then the left-hand side of \eqref{eq:perron-obstruction-identity} is nonpositive, while the right-hand side is strictly positive. This is impossible. Hence \(\rho(\Top)<1\).
\end{proof}

In essence, the lemma says that, provided the second-moment equation has a finite positive-semidefinite solution, Perron excitation forces the second-order operator to be stable. With this tool in hand, we can now apply the argument to the Markov-switching affine recurrence.

\begin{theorem}[Covariance-existence criterion]
\label{thm:second-order-spectral-criterion}
Suppose Assumptions~\ref{ass:stationarity}, \ref{ass:innovations} and \ref{ass:perron-excitation} hold. Then the following are equivalent:
\begin{enumerate}
    \item the stationary solution is square-integrable, \( \mathbb E\|Z\|^2<\infty \);
    \item the second-order spectral radius satisfies \( \rho(\Top)<1 \).
\end{enumerate}
Equivalently,
\begin{equation}
\mathbb E\|Z\|^2<\infty
\quad\Longleftrightarrow\quad
\rho(\Tmat)<1.
\label{eq:main-equivalence}
\end{equation}

When \eqref{eq:main-equivalence} holds, the regime-weighted second moments \(M_1,\ldots,M_M\) are the unique finite solution of
\begin{equation}
M_j
=
\sum_{i=1}^M p_{ij}A_jM_iA_j^\top+\pi_jQ_j,
\qquad j=1,\ldots,M.
\label{eq:regime-moment-equation}
\end{equation}
Equivalently, in vectorised form,
\begin{equation}
\operatorname{vec}(M)
=
\Tmat\operatorname{vec}(M)+q,
\label{eq:vector-moment-equation}
\end{equation}
and therefore
\begin{equation}
\operatorname{vec}(M)
=
(I-\Tmat)^{-1}q.
\label{eq:second-moment-solution}
\end{equation}

Moreover, the following second-order quantities are well-defined.
\begin{enumerate}
    \item The covariance matrix is
    \begin{equation}
    \Sigma
    =
    \operatorname{Var}(Z)
    =
    \mathbb E[ZZ^\top]
    =
    \sum_{j=1}^M M_j.
    \label{eq:covariance-matrix}
    \end{equation}

    \item For components \(Z^a\) and \(Z^b\),
    \begin{equation}
    \operatorname{Var}(Z^a)=\Sigma_{aa},
    \qquad
    \operatorname{Cov}(Z^a,Z^b)=\Sigma_{ab}.
    \label{eq:variance-covariance-entries}
    \end{equation}

    \item If \(\Sigma_{aa}>0\) and \(\Sigma_{bb}>0\), then
    \begin{equation}
    \operatorname{Corr}(Z^a,Z^b)
    =
    \frac{\Sigma_{ab}}{\sqrt{\Sigma_{aa}\Sigma_{bb}}}.
    \label{eq:correlation-entry}
    \end{equation}
\end{enumerate}
\end{theorem}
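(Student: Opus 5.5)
The plan has two directions, joined by the Perron obstruction (Lemma~\ref{lem:perron-obstruction}). We first record that $\Top$ maps the cone $\mathcal K$ into itself, since each $A_jH_iA_j^\top\succeq0$ when $H_i\succeq0$ and $p_{ij}\geq0$. We also note that $q=(\pi_1Q_1,\ldots,\pi_MQ_M)\in\mathcal K$.

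\emph{(1)$\Rightarrow$(2).} Assume $\mathbb E\|Z\|^2<\infty$, so $M_j=\mathbb E[ZZ^\top 1_{\{\Theta=j\}}]$ is finite and positive semidefinite. By stationarity we may write $Z=Z_{n+1}$ and $\Theta=\Theta_{n+1}$. Multiply $Z_{n+1}Z_{n+1}^\top$ by $1_{\{\Theta_{n+1}=j\}}$ and expand using the recurrence. The cross terms vanish by \eqref{eq:cross-term-orthogonality}. For the innovation term, conditioning on $\mathcal A^\Theta$ gives $\mathbb E[\zeta_{n+1}\zeta_{n+1}^\top 1_{\{\Theta_{n+1}=j\}}]=\pi_jQ_j$. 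For the main term,
\[
\mathbb E[A_jZ_nZ_n^\top A_j^\top 1_{\{\Theta_{n+1}=j\}}]=\sum_i A_j\,\mathbb E[Z_nZ_n^\top 1_{\{\Theta_n=i\}}1_{\{\Theta_{n+1}=j\}}]\,A_j^\top .
\]
To reduce this to $p_{ij}A_jM_iA_j^\top$ we need $\mathbb E[Z_nZ_n^\top\mid\Theta_n=i,\Theta_{n+1}=j]=\mathbb E[Z_nZ_n^\top\mid\Theta_n=i]$.

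This is the step needing care. $Z_n$ is a function of $(\Theta_k,\zeta_k)_{k\le n}$. By the Markov property, $\Theta_{n+1}$ is conditionally independent of $(\Theta_k)_{k\le n}$ given $\Theta_n$. The innovations with $k\le n$ have conditional law depending only on $\Theta_k$, by the structure in Assumption~\ref{ass:innovations}. We interpret that assumption, as intended, to mean the conditional law of $\zeta_k$ given $\mathcal A^\Theta$ depends only on $\Theta_k$, or at least not on future regimes. If this is not literally implied, we would first verify the identity on finite truncations $Z_n^{(N)}$ and then pass to the limit by monotone convergence in the trace. With the identity in hand, $M=\Top M+q$ holds with $M\in\mathcal K$ finite, and Lemma~\ref{lem:perron-obstruction} gives $\rho(\Top)<1$.

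\emph{(2)$\Rightarrow$(1).} Assume $\rho(\Top)<1$. Work with the truncations $Z^{(N)}=\sum_{\ell=0}^{N}\Phi_{(\ell)}\zeta_{-\ell}$, where $\Phi_{(\ell)}=A_{\Theta_0}\cdots A_{\Theta_{-\ell+1}}$. Define $M^{(N)}_j=\mathbb E[Z^{(N)}(Z^{(N)})^\top 1_{\{\Theta_0=j\}}]$. Conditional on $\mathcal A^\Theta$, the innovations are independent and centred, so all cross terms vanish. Hence
\[
M^{(N)}_j=\sum_{\ell=0}^N \mathbb E\bigl[\Phi_{(\ell)}Q_{\Theta_{-\ell}}\Phi_{(\ell)}^\top 1_{\{\Theta_0=j\}}\bigr].
\]
A backward induction on $\ell$, using the Markov property and stationarity of $\Theta$, shows that the $\ell$-th summand equals $(\Top^\ell q)_j$. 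Therefore $M^{(N)}=\sum_{\ell\le N}\Top^\ell q$, which converges to $(I-\Top)^{-1}q$ since $\rho(\Top)<1$.

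In particular, $\mathbb E\|Z^{(N)}\|^2=\sum_j\operatorname{tr}M_j^{(N)}$ is bounded uniformly in $N$. Because the cross terms are orthogonal, $Z^{(N)}$ is Cauchy in $L^2$, with $\mathbb E\|Z^{(N')}-Z^{(N)}\|^2=\sum_{N<\ell\le N'}\sum_j\operatorname{tr}(\Top^\ell q)_j\to0$. Its almost-sure limit is $Z$ by \eqref{eq:stationary-backward-representation}, so $Z\in L^2$. Moreover $M=\lim M^{(N)}$ solves \eqref{eq:regime-moment-equation}. Uniqueness of this solution follows from invertibility of $I-\Tmat$, which yields \eqref{eq:second-moment-solution}.

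The covariance statements then follow directly. $\mathbb E[Z]=0$ by \eqref{eq:stationary-mean-zero}, so \eqref{eq:covariance-from-regime-moments} gives $\Sigma$, and the variance, covariance and correlation formulas are the standard definitions read off the entries of $\Sigma$. We expect the main obstacle to be the conditioning identity in the necessity direction, namely justifying that future regimes carry no information about $Z_n$ beyond $\Theta_n$.
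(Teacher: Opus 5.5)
Your proposal is correct and follows the paper's proof. Sufficiency uses truncations with $M^{(N)}=\sum_{\ell\le N}\Top^\ell q$ and Neumann-series summability; your $L^2$-Cauchy argument replaces the paper's Fatou step and has the small bonus of giving $M=\lim_N M^{(N)}=(I-\Top)^{-1}q$ directly, without the one-step recursion. Necessity feeds the one-step moment equation into Lemma~\ref{lem:perron-obstruction}.

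The conditioning step you flag needs no strengthening of Assumption~\ref{ass:innovations}. Only $\mathbb E[Z_nZ_n^\top\mid\mathcal A^\Theta]=\sum_{\ell\ge0}B_\ell Q_{\Theta_{n-\ell}}B_\ell^\top$, with $B_\ell=A_{\Theta_n}\cdots A_{\Theta_{n-\ell+1}}$, enters, and it is $\sigma(\Theta_k:k\le n)$-measurable. For $Z\in L^2$ this identity holds because, given $\mathcal A^\Theta$, each truncation is centred and independent of its remainder. That orthogonal splitting, not plain monotone convergence, is what lets your fallback pass to the limit. The Markov property then supplies the factor $p_{ij}$, a point the paper uses without comment.
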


\begin{proof}
We first prove sufficiency. Suppose \( \rho(\Top)<1 \). By Assumption~\ref{ass:stationarity}, the recurrence admits the stationary backward representation
\[
Z
:=
\sum_{\ell=0}^{\infty}
A_{\Theta_0}A_{\Theta_{-1}}\cdots A_{\Theta_{-\ell+1}}\zeta_{-\ell},
\]
where the \(\ell=0\) product is the identity.

For \(N\geq0\), define the truncated stationary series
\[
Z^{(N)}
:=
\sum_{\ell=0}^{N}
A_{\Theta_0}A_{\Theta_{-1}}\cdots A_{\Theta_{-\ell+1}}\zeta_{-\ell}.
\]
Let
\[
M^{(N)}_j
=
\mathbb E[Z^{(N)}(Z^{(N)})^\top\mathbf 1_{\{\Theta_0=j\}}],
\qquad j=1,\ldots,M.
\]
Write
\[
B_\ell
=
A_{\Theta_0}A_{\Theta_{-1}}\cdots A_{\Theta_{-\ell+1}},
\qquad B_0=I.
\]

Since each \(B_\ell\) is measurable with respect to \(\mathcal A^\Theta\), the cross terms can be checked conditionally on the realised regime path. For \(\ell\neq m\), \(B_\ell\), \(B_m\), and \(\mathbf 1_{\{\Theta_0=j\}}\) are \(\mathcal A^\Theta\)-measurable. Hence, by conditional independence and conditional centering,
\[
\begin{aligned}
\mathbb E\!\left[
B_\ell\zeta_{-\ell}\zeta_{-m}^{\top}B_m^\top
\mathbf 1_{\{\Theta_0=j\}}
\right]
&=
\mathbb E\!\left[
B_\ell
\mathbb E\!\left[
\zeta_{-\ell}\zeta_{-m}^{\top}
\mid \mathcal A^\Theta
\right]
B_m^\top
\mathbf 1_{\{\Theta_0=j\}}
\right]  \\
&=0 .
\end{aligned}
\]
Hence, only the diagonal lag contributions remain.

The zeroth lag contributes \(q_j=\pi_jQ_j\) in regime \(j\), yielding the \(M\)-tuple \(q=(q_1,\ldots,q_M)\). Moving one step forward through the Markov chain applies the second-order transition operator \(\Top\): if a regime-weighted second moment \(H_i\) is located in regime \(i\) at time \(n\), then, after one transition to regime \(j\), its contribution at time \(n+1\) is
\[
H_i\mapsto p_{ij}A_jH_iA_j^\top.
\]
Consequently, a contribution \(H\) propagated forward by \(\ell\) steps becomes \(\Top^\ell H\). Starting from \(q\), the lag-\(\ell\) contribution is therefore \(\Top^\ell q\). Thus
\begin{equation}
M^{(N)}
=
\sum_{\ell=0}^{N}\Top^\ell q.
\label{eq:truncated-second-moment-series}
\end{equation}

Since \(\rho(\Top)<1\), the Neumann series \(\sum_{\ell=0}^{\infty}\Top^\ell\) converges on the finite-dimensional space of \(M\)-tuples of symmetric matrices. Therefore, the right-hand side of \eqref{eq:truncated-second-moment-series} converges to a finite \(M\)-tuple of positive semidefinite matrices. In particular,
\[
\sup_{N\geq0}
\sum_{j=1}^M
\operatorname{tr}\bigl(M_j^{(N)}\bigr)
<\infty.
\]
Since
\[
\mathbb E\|Z^{(N)}\|^2
=
\sum_{j=1}^M
\mathbb E\!\left[
\|Z^{(N)}\|^2 \mathbf 1_{\{\Theta_0=j\}}
\right]
=
\sum_{j=1}^M
\operatorname{tr}\bigl(M_j^{(N)}\bigr),
\]
the sequence \(\{Z^{(N)}\}\) is uniformly bounded in \(L^2\). Since \(Z^{(N)}\to Z\) almost surely by the stationary backward representation, Fatou's lemma gives
\[
\mathbb E\|Z\|^2
\leq
\liminf_{N\to\infty}
\mathbb E\|Z^{(N)}\|^2
<\infty.
\]
Therefore the stationary solution is square-integrable.

We next derive the fixed-point equation for the second moments. Work in stationarity and fix \(j\in S\). On the event \(\{\Theta_{n+1}=j\}\),
\[
Z_{n+1}=A_jZ_n+\zeta_{n+1}.
\]
Thus
\[
M_j
=
\mathbb E\left[
Z_{n+1}Z_{n+1}^\top
\mathbf 1_{\{\Theta_{n+1}=j\}}
\right].
\]
Expanding \(Z_{n+1}Z_{n+1}^\top\) gives four terms:
\[
A_jZ_nZ_n^\top A_j^\top,
\qquad
A_jZ_n\zeta_{n+1}^\top,
\qquad
\zeta_{n+1}Z_n^\top A_j^\top,
\qquad
\zeta_{n+1}\zeta_{n+1}^\top.
\]
The two cross terms vanish after multiplication by \(\mathbf 1_{\{\Theta_{n+1}=j\}}\). Indeed, conditional on \(\mathcal A^\Theta\), the random vector \(Z_n\) is generated by past innovations and the regime path, while \(\zeta_{n+1}\) is conditionally independent of past innovations and conditionally centred. As a result,
\[
\mathbb E\!\left[
A_jZ_n\zeta_{n+1}^\top
\mathbf 1_{\{\Theta_{n+1}=j\}}
\right]
=
0,
\qquad
\mathbb E\!\left[
\zeta_{n+1}Z_n^\top A_j^\top
\mathbf 1_{\{\Theta_{n+1}=j\}}
\right]
=
0.
\]
Therefore
\[
M_j
=
\mathbb E\left[
A_jZ_nZ_n^\top A_j^\top
\mathbf 1_{\{\Theta_{n+1}=j\}}
\right]
+
\mathbb E\left[
\zeta_{n+1}\zeta_{n+1}^\top
\mathbf 1_{\{\Theta_{n+1}=j\}}
\right].
\]

For the first term, conditioning on \(\Theta_n=i\) gives
\[
\mathbb E\left[
A_jZ_nZ_n^\top A_j^\top
\mathbf 1_{\{\Theta_{n+1}=j\}}
\right]
=
\sum_{i=1}^M
p_{ij}A_jM_iA_j^\top.
\]
For the innovation term, Assumption~\ref{ass:innovations} gives
\[
\begin{aligned}
\mathbb E\left[
\zeta_{n+1}\zeta_{n+1}^\top
\mathbf 1_{\{\Theta_{n+1}=j\}}
\right]
&=
\mathbb E\left[
\mathbb E\left[
\zeta_{n+1}\zeta_{n+1}^\top
\mid \mathcal A^\Theta
\right]
\mathbf 1_{\{\Theta_{n+1}=j\}}
\right] \\
&=
\mathbb E\left[
Q_{\Theta_{n+1}}
\mathbf 1_{\{\Theta_{n+1}=j\}}
\right]
=
\pi_jQ_j.
\end{aligned}
\]
Therefore
\[
M_j
=
\sum_{i=1}^M p_{ij}A_jM_iA_j^\top+\pi_jQ_j,
\]
which proves \eqref{eq:regime-moment-equation}. In operator form,
\[
M=\Top M+q.
\]
Since \( \rho(\Top)<1 \), \(I-\Top\) is invertible and
\[
(I-\Top)^{-1}
=
\sum_{n=0}^{\infty}\Top^n.
\]
Thus
\[
M=(I-\Top)^{-1}q.
\]
After vectorisation, this gives \eqref{eq:vector-moment-equation} and \eqref{eq:second-moment-solution}. Uniqueness follows from invertibility of \(I-\Top\).

We now prove necessity. Suppose that \(\mathbb E\|Z\|^2<\infty\). Then each regime-weighted second moment
\[
M_j
=
\mathbb E[Z_nZ_n^\top\mathbf 1_{\{\Theta_n=j\}}]
\]
is finite. The preceding second-moment calculation is therefore justified and gives
\[
M=\Top M+q.
\]
Moreover, \(M_j\succeq0\) and \(q_j=\pi_jQ_j\succeq0\) for every \(j\). Thus \(M\) is a finite solution in the cone
\[
\mathcal K=\{H=(H_1,\ldots,H_M):H_j\succeq0,\ j=1,\ldots,M\}.
\]
By Lemma~\ref{lem:perron-obstruction}, \(\rho(\Top)<1\). This proves necessity.

Combining sufficiency and necessity gives \eqref{eq:main-equivalence}.

Finally, since the stationary solution is centred by \eqref{eq:stationary-mean-zero},
\[
\operatorname{Var}(Z)=\mathbb E[ZZ^\top].
\]
Using the regime decomposition,
\[
\mathbb E[ZZ^\top]
=
\sum_{j=1}^M
\mathbb E[ZZ^\top\mathbf 1_{\{\Theta=j\}}]
=
\sum_{j=1}^M M_j.
\]
This proves \eqref{eq:covariance-matrix}. The variance, covariance and correlation formulae \eqref{eq:variance-covariance-entries} and \eqref{eq:correlation-entry} follow immediately from the entries of \(\Sigma\). This completes the proof.
\end{proof}

The spectral-radius condition \(\rho(\Top)<1\) is familiar from mean-square stability theory for Markov jump linear systems. The point here is its stationary-law interpretation: for the affine recurrence, the innovation covariance is repeatedly propagated through the tensorised Markov-switching dynamics, so the stationary covariance exists precisely when this propagation is summable. Perron excitation is the non-degeneracy condition that ensures any non-summable Perron direction is actually reached by the innovations.

\begin{remark}[Relation to the existing literature]
The condition $\rho(T)<1$ is closely related to mean-square stability criteria for Markov jump linear systems \cite{CostaFragosoMarques2005}, while second-order properties of Markov-switching autoregressive models are studied, among others, by Basak and Lu \cite{BasakLu2005}. In common with these literatures, we use a Markov-switching Kronecker operator to describe second-order propagation. However, the emphasis of our result is different: starting from a recurrence which already possesses a strictly stationary solution, Theorem~\ref{thm:second-order-spectral-criterion} gives an exact criterion for square-integrability of the stationary law and hence for the existence of its covariance matrix. Our paper is also related to the Kesten--Goldie theory and its extensions \cite{Kesten1973,Goldie1991,BuraczewskiDamekMikosch2016}, but the emphasis of those works does not overlap with the thrust of the present result, since our concern is the finite-versus-infinite second-moment question rather than tail asymptotics.
\end{remark}

\begin{corollary}[Scalar reduction]
\label{cor:scalar-reduction}
Suppose \(d=1\), so that
\[
Z_{n+1}=a_{\Theta_{n+1}}Z_n+\zeta_{n+1}.
\]
Let
\[
D_2=\operatorname{diag}(a_1^2,\ldots,a_M^2).
\]
If the scalar innovation variances satisfy \(Q_j>0\) for every \(j\in S\), then
\begin{equation}
\mathbb E Z^2<\infty
\quad\Longleftrightarrow\quad
\rho(PD_2)<1.
\label{eq:scalar-second-moment-condition}
\end{equation}
\end{corollary}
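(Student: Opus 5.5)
The plan is to deduce the corollary directly from Theorem~\ref{thm:second-order-spectral-criterion}, specialised to \(d=1\). Two things need checking: that the hypotheses of the theorem hold in the scalar setting, and that \(\rho(\Top)\) equals \(\rho(PD_2)\).

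For the hypotheses, Assumptions~\ref{ass:stationarity} and \ref{ass:innovations} are standing assumptions, so only Perron excitation needs verifying. Since \(Q_j>0\) for every \(j\), the scalar \(1\times1\) matrices \(Q_j\) are positive definite. Lemma~\ref{lem:sufficient-excitation} then gives Assumption~\ref{ass:perron-excitation}.

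For the spectral identification, note that when \(d=1\) each \(A_j\otimes A_j\) is the scalar \(a_j^2\). The space \((\mathbb S^1)^M\) is simply \(\mathbb R^M\), so \(\Top\) is represented by the \(M\times M\) matrix \(\Tmat\) with entries \(\Tmat_{ji}=p_{ij}a_j^2\). Hence \(\Tmat=D_2P^\top\), because \((D_2P^\top)_{ji}=a_j^2(P^\top)_{ji}=a_j^2p_{ij}\). The symmetric-subspace restriction mentioned after \eqref{eq:spectral-radius-T} is vacuous here.

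It remains to show \(\rho(D_2P^\top)=\rho(PD_2)\). Transposition preserves the spectrum, and \((D_2P^\top)^\top=PD_2\) because \(D_2\) is diagonal, so the two spectral radii coincide. Alternatively one can use \(\rho(XY)=\rho(YX)\).

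Combining these, the equivalence \eqref{eq:main-equivalence} becomes \(\mathbb E Z^2<\infty\iff\rho(PD_2)<1\), which is \eqref{eq:scalar-second-moment-condition}. The only step needing care is getting the index orientation \(\Tmat_{ji}=p_{ij}(\cdot)\) right, so as to identify the matrix as \(D_2P^\top\) rather than \(PD_2\). Since transposition leaves the spectral radius unchanged, either orientation gives the same answer, and I expect no real obstacle.
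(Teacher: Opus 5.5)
Your proposal is correct and follows essentially the same route as the paper's proof. The paper identifies $\Tmat_{ji}=p_{ij}a_j^2$, writes $\Tmat=(PD_2)^\top$ (the same matrix as your $D_2P^\top$), uses transpose-invariance of the spectral radius, and obtains Perron excitation from Lemma~\ref{lem:sufficient-excitation} before invoking Theorem~\ref{thm:second-order-spectral-criterion}.
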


\begin{proof}
In the scalar case, \(A_j=a_j\). Hence the vectorised second-order matrix \(\Tmat\) has entries
\[
\Tmat_{ji}=p_{ij}a_j^2.
\]
Therefore
\[
\Tmat=(PD_2)^\top.
\]
Since spectral radius is invariant under transposition,
\[
\rho(\Tmat)=\rho(PD_2).
\]
The claim follows from Theorem~\ref{thm:second-order-spectral-criterion}. The condition \(Q_j>0\) for all \(j\) guarantees Perron excitation by Lemma~\ref{lem:sufficient-excitation}.
\end{proof}
\section{Examples}

We now provide two examples illustrating the separation between strict stationarity and covariance existence. The first is a scalar example in which both the Lyapunov condition and the second-order spectral condition can be verified analytically. The second is a two-dimensional numerical illustration. Its purpose is to show that the same covariance-threshold mechanism persists in a genuinely multivariate setting, where variances and covariances interact through non-diagonal regime matrices.

\subsection{A scalar example}

Consider the scalar Markov-switching affine recurrence of \eqref{eq:intro-recurrence}, where \((\Theta_n)\) is a two-state Markov chain with transition matrix
\begin{equation}
P=
\begin{pmatrix}
49/50 & 1/50\\
1/5 & 4/5
\end{pmatrix},
\label{eq:scalar-example-P}
\end{equation}
and \(a_1=1/2\), \(a_2=3/2\). The invariant distribution is
\[
\pi_1=\frac{1/5}{1/50+1/5}=\frac{10}{11},
\qquad
\pi_2=\frac{1/50}{1/50+1/5}=\frac{1}{11}.
\]
Hence, by the ergodic theorem for finite-state Markov chains, the top Lyapunov exponent is
\begin{equation}
\gamma
=
\frac{10}{11}\log\frac12
+
\frac{1}{11}\log\frac32
=
\frac{1}{11}\log\left(\frac{3}{2048}\right)
<0.
\label{eq:scalar-example-lyapunov}
\end{equation}
As a result, the recurrence is contractive on average and admits a strictly stationary causal solution under the standing logarithmic moment assumptions on the innovations.

The important difference appears when one asks whether this stationary solution has a finite second moment. In the scalar case, the second-order spectral criterion reduces to
\begin{equation}
\mathbb E Z^2<\infty
\quad\Longleftrightarrow\quad
\rho(PD_2)<1,
\qquad
D_2=\operatorname{diag}(a_1^2,a_2^2).
\label{eq:scalar-example-second-order-criterion}
\end{equation}
Here
\begin{equation}
PD_2
=
\begin{pmatrix}
49/50 & 1/50\\
1/5 & 4/5
\end{pmatrix}
\begin{pmatrix}
1/4 & 0\\
0 & 9/4
\end{pmatrix}
=
\begin{pmatrix}
49/200 & 9/200\\
1/20 & 9/5
\end{pmatrix}.
\label{eq:scalar-example-PD2}
\end{equation}
Let \(p(\lambda)=\det(\lambda I-PD_2)\). Then
\[
p(1)
=
\det(I-PD_2)
=
\det
\begin{pmatrix}
151/200 & -9/200\\
-1/20 & -4/5
\end{pmatrix}
=
-\frac{97}{160}<0.
\]
Since \(PD_2\) is a positive matrix, its Perron root is the largest positive eigenvalue. The inequality \(p(1)<0\) implies that this Perron root is larger than one, so \(\rho(PD_2)>1\). Consequently, if the innovation variance is positive in each regime, the stationary solution exists but is not square-integrable:
\[
\mathbb E Z^2=\infty.
\]

This example gives an explicit analytic separation:
\[
\gamma<0
\qquad\text{but}\qquad
\rho(PD_2)>1.
\]
Thus, strict stationarity does not necessarily imply finite second moments.

\subsection{A two-dimensional numerical covariance-threshold illustration}

The purpose of this example is numerical. The reported Lyapunov exponents are computed by normalised product iteration along a long simulated path of the finite-state Markov chain. The second-order spectral radii and covariance matrices are computed directly from the associated finite-dimensional Markov-switching Kronecker operator. Thus, the example illustrates the covariance-threshold mechanism, rather than providing a fully analytic verification of the Lyapunov exponent.

Again, consider the Markov-switching affine recurrence of \eqref{eq:intro-recurrence}, but this time \((\Theta_n)\) has transition matrix
\begin{equation}
P=
\begin{pmatrix}
0.98 & 0.02\\
0.20 & 0.80
\end{pmatrix}.
\label{eq:twod-example-P}
\end{equation}
The invariant distribution is
\[
\pi_1=\frac{0.20}{0.02+0.20}\approx 0.9091,
\qquad
\pi_2=\frac{0.02}{0.02+0.20}\approx 0.0909.
\]
Let the innovation covariance matrix be the same in both regimes:
\begin{equation}
Q_1=Q_2=Q,
\qquad
Q=
\begin{pmatrix}
1 & 0.4\\
0.4 & 1.2
\end{pmatrix}.
\label{eq:twod-example-Q}
\end{equation}
Since \(Q\) is positive definite, the Perron-excitation condition is satisfied.

First take
\begin{equation}
A_1=
\begin{pmatrix}
0.55 & 0.12\\
-0.08 & 0.48
\end{pmatrix},
\qquad
A_2=
\begin{pmatrix}
0.98 & 0.28\\
0.11 & 0.85
\end{pmatrix}.
\label{eq:twod-example-A-finite}
\end{equation}
Regime \(1\) is locally contractive. Regime \(2\) is locally expansive, since the eigenvalues of \(A_2\) are approximately \(1.1021\) and \(0.7279\). Thus regime \(2\) can amplify some directions of the state vector. However, the Markov chain spends most of its time in the contractive regime. Estimating the top Lyapunov exponent by simulating a long path of the Markov chain and applying the standard normalised product iteration to
\(
A_{\Theta_n}A_{\Theta_{n-1}}\cdots A_{\Theta_1}
\)
yields
\(
\gamma \approx -0.589<0.
\)
In this sense, the recurrence is contractive on average and admits a strictly stationary causal solution.\footnote{The numerical computations were carried out in R. The simulation and matrix-computation code is available upon request.}

The associated second-order operator is represented by the block matrix \(T\) with blocks
\begin{equation}
T_{ji}=p_{ij}(A_j\otimes A_j),
\qquad
i,j\in\{1,2\}.
\label{eq:twod-example-T-blocks}
\end{equation}
For the parameter values in \eqref{eq:twod-example-A-finite}, direct computation of the associated second-order operator gives
\[
\rho(T)\approx 0.9734<1.
\]
Therefore, by Theorem~\ref{thm:second-order-spectral-criterion}, the stationary solution is square-integrable. The regime-weighted second moments \(M_1\) and \(M_2\) solve
\begin{equation}
M_j=\sum_{i=1}^2 p_{ij}A_jM_iA_j^\top+\pi_jQ,
\qquad
j=1,2.
\label{eq:twod-example-moment-system}
\end{equation}
Solving this finite-dimensional linear system gives
\begin{equation}
M_1\approx
\begin{pmatrix}
2.1344 & 0.6171\\
0.6171 & 1.4092
\end{pmatrix},
\qquad
M_2\approx
\begin{pmatrix}
6.7541 & 3.0232\\
3.0232 & 1.5401
\end{pmatrix}.
\label{eq:twod-example-M-values}
\end{equation}
Hence, the unconditional covariance matrix is
\begin{equation}
\Sigma=M_1+M_2
\approx
\begin{pmatrix}
8.8885 & 3.6403\\
3.6403 & 2.9493
\end{pmatrix}.
\label{eq:twod-example-Sigma}
\end{equation}
Consequently,
\[
\operatorname{Var}(Z^1)\approx 8.8885,
\qquad
\operatorname{Var}(Z^2)\approx 2.9493,
\qquad
\operatorname{Cov}(Z^1,Z^2)\approx 3.6403.
\]
The Pearson correlation is therefore
\[
\operatorname{Corr}(Z^1,Z^2)
=
\frac{3.6403}{\sqrt{8.8885\cdot 2.9493}}
\approx 0.7110.
\]
Thus, in this parameter regime, the stationary solution has a well-defined covariance and correlation matrix. Notice that the covariance matrix is not a scalar multiple of the innovation covariance matrix. The off-diagonal entries are generated jointly by the innovation covariance and by the non-diagonal regime matrices.

Now keep the same transition matrix \eqref{eq:twod-example-P}, the same contractive regime \(A_1\), and the same innovation covariance matrix \eqref{eq:twod-example-Q}, but replace the second regime by
\begin{equation}
A_2=
\begin{pmatrix}
1.02 & 0.30\\
0.12 & 0.88
\end{pmatrix}.
\label{eq:twod-example-A-infinite}
\end{equation}
The eigenvalues of this matrix are approximately \(1.1522\) and \(0.7478\). Thus, regime \(2\) is again locally expansive. Despite this, because the chain still spends most of its time in regime \(1\), the same normalised product iteration gives
\[
\gamma\approx -0.586<0.
\]
Hence, the recurrence remains strictly stationary.

However, the second-order spectral radius is now
\[
\rho(T)\approx 1.0637>1.
\]
By Theorem~\ref{thm:second-order-spectral-criterion},
\[
\mathbb E\|Z\|^2=\infty.
\]
As a result, the stationary solution exists, but it has no finite covariance matrix. Consequently, the Pearson correlation matrix is not well-defined.

The point of the example is that the threshold is second-order rather than pathwise. The top Lyapunov exponent controls contraction along typical sample paths and hence strict stationarity. The Markov-switching Kronecker operator controls the growth of propagated innovation covariances. A small increase in the locally expansive regime can leave the top Lyapunov exponent negative while pushing the second-order spectral radius above one. In that case, the process remains strictly stationary, but variances, covariances and correlations are no longer finite objects.
\section{Conclusion}

We have shown that, for finite-state Markov-switching affine recurrences, strict stationarity and finite covariance are governed by different spectral objects. The contribution of this paper is not the abstract mean-square stability inequality alone, but its stationary-law interpretation: conditional on strict stationarity, the Markov-switching Kronecker operator gives an exact criterion for whether the stationary distribution has finite covariance.

The top Lyapunov exponent controls the existence of a stationary causal solution. The Markov-switching Kronecker operator controls whether that stationary solution is square-integrable. Thus, variances, covariances and Pearson correlations are legitimate stationary objects precisely in the second-order stable region \(\rho(T)<1\), up to the Perron-excitation condition used in the necessity direction.

The examples illustrate that this distinction is not only technical. A recurrence may contract along typical sample paths and hence admit a strictly stationary solution, while sufficiently persistent visits to locally expansive regimes can still destroy second moments. In that case, the process is stationary, but its covariance and correlation matrices are not well-defined. The point, therefore, is that covariance calculations in Markov-switching affine models require a second-order spectral check in addition to the usual stationarity condition.


\newpage
\begin{thebibliography}{99}

\bibitem{BasakLu2005}
G.~K. Basak and Z.-Q. Lu,
\newblock Stationarity of switching VAR and other related models,
\newblock \emph{arXiv preprint} math/0507267, 2005.

\bibitem{Bollerslev1986}
T. Bollerslev,
\newblock Generalized autoregressive conditional heteroskedasticity,
\newblock \emph{Journal of Econometrics} \textbf{31} (1986), no.~3, 307--327.

\bibitem{BougerolPicard1992AnnProb}
P. Bougerol and N. Picard,
\newblock Strict stationarity of generalized autoregressive processes,
\newblock \emph{The Annals of Probability} \textbf{20} (1992), no.~4, 1714--1730.

\bibitem{BougerolPicard1992JEconometrics}
P. Bougerol and N. Picard,
\newblock Stationarity of GARCH processes and of some nonnegative time series,
\newblock \emph{Journal of Econometrics} \textbf{52} (1992), no.~1--2, 115--127.

\bibitem{Brandt1986}
A. Brandt,
\newblock The stochastic equation \(Y_{n+1}=A_nY_n+B_n\) with stationary coefficients,
\newblock \emph{Advances in Applied Probability} \textbf{18} (1986), no.~1, 211--220.

\bibitem{BuraczewskiDamekMikosch2016}
D. Buraczewski, E. Damek and T. Mikosch,
\newblock \emph{Stochastic Models with Power-Law Tails: The Equation \(X=AX+B\)},
\newblock Springer, New York, 2016.

\bibitem{CostaFragosoMarques2005}
O. L. V. Costa, M. D. Fragoso and R. P. Marques,
\newblock \emph{Discrete-Time Markov Jump Linear Systems},
\newblock Springer, London, 2005.

\bibitem{DiaconisFreedman1999}
P. Diaconis and D. Freedman,
\newblock Iterated random functions,
\newblock \emph{SIAM Review} \textbf{41} (1999), no.~1, 45--76.

\bibitem{Engle1982}
R. F. Engle,
\newblock Autoregressive conditional heteroscedasticity with estimates of the variance of United Kingdom inflation,
\newblock \emph{Econometrica} \textbf{50} (1982), no.~4, 987--1007.

\bibitem{Goldie1991}
C. M. Goldie,
\newblock Implicit renewal theory and tails of solutions of random equations,
\newblock \emph{The Annals of Applied Probability} \textbf{1} (1991), no.~1, 126--166.

\bibitem{GrangerAndersen1978}
C.~W.~J. Granger and A.~P. Andersen,
\newblock \emph{An Introduction to Bilinear Time Series Models},
\newblock Vandenhoeck \& Ruprecht, G{\"o}ttingen, 1978.

\bibitem{Hamilton1989}
J.~D. Hamilton,
\newblock A new approach to the economic analysis of nonstationary time series and the business cycle,
\newblock \emph{Econometrica} \textbf{57} (1989), no.~2, 357--384.

\bibitem{Kesten1973}
H. Kesten,
\newblock Random difference equations and renewal theory for products of random matrices,
\newblock \emph{Acta Mathematica} \textbf{131} (1973), 207--248.

\bibitem{Krolzig1997}
H.-M. Krolzig,
\newblock \emph{Markov-Switching Vector Autoregressions: Modelling, Statistical Inference, and Application to Business Cycle Analysis},
\newblock Lecture Notes in Economics and Mathematical Systems, vol.~454,
\newblock Springer, Berlin, 1997.

\bibitem{LingPengZhu2015}
S. Ling, L. Peng and F. Zhu,
\newblock Inference for a special bilinear time series model,
\newblock \emph{Journal of Time Series Analysis} \textbf{36} (2015), no.~1, 61--76.

\bibitem{MaoYuan2006}
X. Mao and C. Yuan,
\newblock \emph{Stochastic Differential Equations with Markovian Switching},
\newblock Imperial College Press, London, 2006.

\bibitem{MeynTweedie2009}
S. P. Meyn and R. L. Tweedie,
\newblock \emph{Markov Chains and Stochastic Stability},
\newblock 2nd ed.,
\newblock Cambridge University Press, Cambridge, 2009.

\bibitem{Nelson1990}
D. B. Nelson,
\newblock Stationarity and persistence in the GARCH(1,1) model,
\newblock \emph{Econometric Theory} \textbf{6} (1990), no.~3, 318--334.

\bibitem{NichollsQuinn1982}
D. F. Nicholls and B. G. Quinn,
\newblock \emph{Random Coefficient Autoregressive Models: An Introduction},
\newblock Springer, New York, 1982.

\bibitem{ShaoXi2015}
J. Shao and F. Xi,
\newblock Stability and recurrence of regime-switching diffusion processes,
\newblock \emph{SIAM Journal on Control and Optimization} \textbf{52} (2014), no.~6, 3496--3516.

\bibitem{SubbaRaoGabr1984}
T. Subba Rao and M. M. Gabr,
\newblock \emph{An Introduction to Bispectral Analysis and Bilinear Time Series Models},
\newblock Lecture Notes in Statistics, vol.~24,
\newblock Springer, New York, 1984.

\bibitem{Vervaat1979}
W. Vervaat,
\newblock On a stochastic difference equation and a representation of non-negative infinitely divisible random variables,
\newblock \emph{Advances in Applied Probability} \textbf{11} (1979), no.~4, 750--783.

\bibitem{YinZhu2010}
G. G. Yin and C. Zhu,
\newblock \emph{Hybrid Switching Diffusions: Properties and Applications},
\newblock Springer, New York, 2010.

\end{thebibliography}
\end{document}